\documentclass[11pt,a4]{article}
\usepackage{amssymb}
\usepackage{amsfonts}
\usepackage{amsthm}
\usepackage{amsbsy}
\usepackage{graphicx}
\usepackage{verbatim}
\usepackage{enumerate}
\usepackage{enumitem}
\usepackage{hyperref}
\def\BE{\begin{equation}}
\def\EE{\end{equation}}
\def\BEA{\begin{eqnarray}}
\def\EEA{\end{eqnarray}}

\def\NNL{\nonumber \\}
\def\NN{\nonumber}
\def\NewAND{\hspace{-0.15in} & & \hspace{-0.15in}}

\newtheorem{Theo}{Theorem}[section]
\newtheorem{Lem}{Lemma}[section]

\newtheorem{Rem}{Remark}[section]

\theoremstyle{Definition}

\providecommand{\MR}{\relax\ifhmode\unskip\space\fi MR }

\providecommand{\href}[2]{#2}

\begin{document}

\title{Hidden Markov Mixture Autoregressive Models:
Parameter Estimation}

\author{S.H.Alizadeh, S.Rezakhah\footnotemark[1]}
\footnotetext[1]{	
            Faculty of Mathematics and Computer Science
			,Amirkabir University of Technology, Tehran, Iran.
            Email:\texttt{rezakhah@aut.ac.ir,\, sasan\_alizadeh@aut.ac.ir} \par
			}
\date{}
\maketitle

\begin{abstract}
This report introduces a parsimonious structure for mixture of autoregressive models, where the weighting coefficients are determined through latent random variables as functions of all past observations. These variables follow a hidden Markov model. We modify EM and Baum-Welch algorithms to estimate the parameters of the model.
\end{abstract}

\textit{MSC:} primary 62M10, 60J10 secondary 60G25

\textit{Keywords and phrases.}
			Hidden Markov Model, Mixture Autoregressive Model, Parameter Estimation. \par

\section{Hidden Markov Mixture Autoregressive Model} \label{Sec HM-MAR}

Let $Y=\{Y_t\}_{t=0}^\infty$ be a sequence of {continuous}  random variables, {where}  $y_t$ {is} a realization of $Y_t$. {Also let} $\mathcal{F}_{t}=\sigma\{Y_s:s \leq t\}$ {represents} the sigma-field of all information up to time $t$, $F(y_t|\mathcal{F}_{t-1})$  the conditional distribution function of $Y_t$ given past information and  $\alpha_h^{(t)} \equiv \alpha_h^{(t)}(y_1,...,y_{t-1})$. {In addition}
 $\{Z_t\}_{t \geq p}$ {denotes} a hidden or latent process {which construct a} positive recurrent Markov chain on a finite set $E=\{1,2,...,K\}$, {with the} initial conditional probabilities
\BEA
   \boldsymbol{\rho}=(\rho_1,\cdots,\rho_K)^\prime,\,\,\, \rho_h=P(Z_{p}=h|y_0,\cdots,y_{p-1}) \qquad
   h=1,...,K, \label{Initial Probabilities}
\EEA
and transition probability matrix
\BEA
    P=\|\pi_{i,j}\|_{K \times K},
    \label{Transition Probability Matrix}
\EEA
in which
\BEA
    \pi_{i,j}=P(Z_t=j|Z_{t-1}=i),\qquad i,j \in \{1,...,K\}.
    \label{Transition Probabilities}
\EEA
{Also} invariant probability measure {is denoted by}
\BEA
    \boldsymbol{\mu}=(\alpha_1,...,\alpha_K)^\prime, \label{Mu}
\EEA
{where} $\alpha_j=\lim_{t \rightarrow \infty} P(Z_t=j)$.

We consider $\{Y_t\}_{t=0}^\infty$ to have a Hidden Markov-Mixture Autoregressive, HM-MAR($K,p$), model with $K$ normal distributions, {and} $p$ lagged observations in the AR processes, if the conditional distribution of $Y_t$ given $\mathcal{F}_{t-1}$ {follows}

\begin{enumerate}
    \item[i.]

        {For} $t=p$
         \BEA
            F(y_{p},Z_p=h|\mathcal{F}_{p-1}) =  \rho_h
                                   \Phi(\frac{y_p-a_{0,h}-a_{1,h}y_{p-1}-...-a_{{p},h}y_{0}}{\sigma_h}),
                                   \label{HM MAR p1}
         \EEA

    \item[ii.]

        {For} $t \geq p + 1$
        \BEA
            F(y_{t}|\mathcal{F}_{t-1})= \sum_{h=1}^K \alpha_h^{(t)}
            \Phi(\frac{y_t-a_{0,h}-a_{1,h}y_{t-1}-...-a_{{p},h}y_{t-{p}}}{\sigma_h}),
            \label{HM MAR}
        \EEA
\end{enumerate}
{where} $\alpha_h^{(t)}=P(Z_t=h|\mathcal{F}_{t-1})$ and $\Phi(.)$ is the standard normal distribution function.

In fact latent random variables $\{Z_t\}_{t=p+1}^\infty$ determine the contribution of distributions in the mixture model. Also conditioning on $Z_t$, $\{Y_t,t \in \mathbb{N}\}$ is $p$-tuple Markov{,} independent of $\{Z_{s},\, s \neq t \}$. {So by} conditioning on $\{Y_{t-1},\cdots,Y_{t-p}\}$ and $Z_t$, $Y_t$ {is independent of} $\{Y_{s}, \, s < t-p\}$ and $\{Z_{s},\, s \neq t \}$.

The novelty of HM-MAR model is that
the contribution of each distribution in the mixture structure is not of predefined fixed form.
{Although HM-MAR model uses all} past observations from $Y_0$ to $Y_{t-1}$ {but}
the hidden Markov assumption of the process $\{Z_t\}_{t \geq p}$, enables us to build a parsimonious model.

The MAR model \cite{WL} can be considered as a special case of such a HM-MAR model (\ref{HM MAR p1}-\ref{HM MAR}), in which the transition matrix $P$ of the process $\{Z_t\}_{t \geq p}$ has $K$ identical rows (i.e. $p(Z_t=i|Z_{t-1}=j)=\alpha_i$ for all $i,\,j=1,...,K${. Thus} $\{Z_t\}_{t=p+1}^\infty$ are independent and identically distributed) with $p(Z_t=i|Z_{t-1}=j)=\alpha_i$.

HM-MAR model will also lead to hidden Markov model in general state space where $p$ is considered to be zero in (\ref{HM MAR}) (i.e. $Y_t$ given $Z_t$, is independent of past observations).

\section{Estimation}
In this section, we discuss estimation of parameters of a HM-MAR$(K,p)$ model.
A new algorithm is proposed based on  modification of Baum-Welch \cite{macdonald}  and EM \cite{mclachlan} algorithms.
Baum welch algorithm was originally proposed in the context of Hidden Markov Models for parameter estimation
(For a comprehensive review see MacDonal and Zucchini \cite{macdonald}).
In HMM each observation just depends on a state of a hidden variable, however in HM-MAR, past observations have also effect on next time series observation. First we justify that the modification of Baum-Welch algorithm is correct and then modify the EM algorithm for the case where the latent variable follows a Hidden Markov process.

Let denote $\boldsymbol{A}_j=(a_{0,j},\cdots,a_{p,j})^\prime$ then $\theta=\{\boldsymbol{A}_j, \, \sigma_j, \, \rho_j, \, \pi_{mn}, \, m,n,j=1,\cdots,K\}$  constitutes the parameter set of HM-MAR model, which includes $\{K^2 + (p+2)K\}$ parameters. As $Y_t$ given $Z_t$ forms a $p$-tuple Markov in HM-MAR model, its conditional distribution can be written as
\BEA
    F(y_t|y_{0}...y_{t-1},z_{t})= \prod_{k=1}^K \Phi(\frac{y_t-\mathbf{Y}_{t-1}^\prime A_k}{\sigma_k})^{I(z_t=k)},
    \label{conditional prob}
\EEA
where $\mathbf{Y}_{t}=(1,y_{t},\cdots,y_{t-p+1})^\prime$,
also the conditional distribution $P(z_{t}|z_{t-1})$ is given by
\BEA
    P(Z_t=z_{t}|Z_{t-1}=z_{t-1})=\prod_j\prod_k \pi_{j,k}^{I(z_t=k)I(z_{t-1}=j)}. \label{hidden Markov Transition prob}
\EEA

\subsection{Extension of Baum-Welch Algorithm}
\begin{Lem} \label{Lemma Conditional Independence}
    Let $\{y_t\}_{t=0}^T$ be a set of time series observations and $\{Z_t\}$ be a set of correct predictor indexes, in ARSNN next time series observations just depends on the last correct predictor. That is for $t \leq k \leq T$
    \BEA
       F(y_{t+1},\cdots,y_K|y_1,\cdots,y_{t}, \NewAND \{Z_{s}\}_{s \in \mathbb{N},\,s
       \leq t})
       \NNL & = &
       F(y_{t+1},\cdots,y_K|y_1,\cdots,y_{t},Z_{t}) \label{PYtZt}
    \EEA
\end{Lem}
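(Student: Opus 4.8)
The plan is to marginalise the future over the latent chain and then exploit the two structural hypotheses of the model: that given $Z_s$ and the $p$ most recent observations, $Y_s$ is independent of $\{Y_r:r<s-p\}$ and of $\{Z_r:r\neq s\}$ (the property underlying (\ref{conditional prob})), and that $\{Z_t\}$ is a Markov chain whose transitions $\pi_{i,j}$ do not depend on the observations. Throughout I abbreviate $y_{a:b}=(y_a,\dots,y_b)$ and $z_{a:b}=(z_a,\dots,z_b)$.

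First I would write the conditional law on the left of (\ref{PYtZt}) as a sum over the intervening hidden states,
\[
  f(y_{t+1:k}\mid y_{1:t},z_{1:t})=\sum_{z_{t+1:k}} f(y_{t+1:k},z_{t+1:k}\mid y_{1:t},z_{1:t}),
\]
and factor the summand by the chain rule into one-step contributions for $s=t+1,\dots,k$, each of the form $f(y_s\mid y_{1:s-1},z_{1:s})\,P(z_s\mid y_{1:s-1},z_{1:s-1})$.

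Next I would collapse the two pieces of each contribution using the hypotheses: the emission factor reduces to $f(y_s\mid \mathbf{Y}_{s-1},z_s)=\prod_{h=1}^{K}\Phi\!\big((y_s-\mathbf{Y}_{s-1}'A_h)/\sigma_h\big)^{I(z_s=h)}$ as in (\ref{conditional prob}), thereby shedding all dependence on $z_{1:s-1}$ and on observations older than lag $p$, while the transition factor reduces to $P(z_s\mid z_{s-1})=\pi_{z_{s-1},z_s}$ by the Markov property and the autonomy of the hidden chain. After substitution the summand becomes $\prod_{s=t+1}^{k}\pi_{z_{s-1},z_s}\,f(y_s\mid \mathbf{Y}_{s-1},z_s)$.

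The decisive observation is that in this product the conditioning states $z_{1:t}$ enter only through the single factor $\pi_{z_t,z_{t+1}}$ at $s=t+1$: the states $z_{1:t-1}$ have disappeared entirely and $z_t$ survives solely as the left index of the first transition. Hence the sum over $z_{t+1:k}$ is a function of $y_{1:k}$ and of $z_t$ alone, so conditioning on $z_t$ in place of $z_{1:t}$ leaves it unchanged, which is precisely (\ref{PYtZt}). The main obstacle I anticipate is the bookkeeping in the factorisation, namely verifying that every emission factor genuinely loses its dependence on the earlier hidden states and on the remote past of $Y$—so that no channel other than $z_t$ can carry information across time—which requires invoking the conditional-independence assumption at each index $s$ rather than once; the boundary indices with $s-p<1$ are covered by the initial specification (\ref{HM MAR p1}) at $t=p$.
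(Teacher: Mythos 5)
Your proof is correct, but it follows a genuinely different route from the paper's. The paper argues by induction on the length of the future block: the base case $k=t+1$ marginalizes over $Z_{t+1}$ alone, and the inductive step peels off the last observation $y_{\ell+1}$, introduces only $Z_{\ell+1}$, and invokes the induction hypothesis for the remaining factor; the full sum over hidden paths is never written out. You instead marginalize over the entire hidden path $z_{t+1:k}$ in one stroke and factor the complete-data density into transition and emission terms, so that the claim is read off from the single observation that $z_{1:t}$ survives only as the left index of $\pi_{z_t,z_{t+1}}$. Both arguments rest on exactly the same two structural hypotheses, namely the emission conditional independence behind (\ref{conditional prob}) and the Markov property and autonomy of $\{Z_t\}$ behind (\ref{hidden Markov Transition prob}). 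Your global factorization buys transparency and a small bonus: it sidesteps a loose step in the paper's induction, where $P(Z_{\ell+1}\mid y_1,\cdots,y_\ell,\{Z_s\}_{s\le t})$ is identified with $P(Z_{\ell+1}\mid Z_t)$; conditioning on the intermediate observations does carry information about the intermediate hidden states, so that identification is not literally valid --- though the quantity does depend on the $Z$'s only through $Z_t$, which is all the induction actually needs. In your version the intermediate states are summed out explicitly, so no such identification ever arises, and your factorization is moreover the same one the paper later uses for the complete-data log-likelihood in the EM section. What the paper's induction buys in exchange is brevity: it avoids the bookkeeping over hidden paths that you yourself flag as the main burden of your approach.
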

\begin{proof}
Considering the homogeneous hidden Markov structure assumption of $\{Z_t\}$ in HM-MAR model (\ref{HM MAR p1}-\ref{HM MAR}) and the assumption that  $y_t$ given we have information about the $Z_t$, just depends on $p$ lagged time series observations through \ref{conditional prob}, we use the method of induction to prove (\ref{PYtZt}). So for $k=t+1$ we have that
\BEA
    F(y_{t+1}|y_1,\cdots,y_{t}, \NewAND \{Z_{s}\}_{s \in \mathbb{N},\,s \leq t})
    \NNL &=& \sum_{j=1}^K F(y_{t+1},Z_{t+1}=j|y_1,\cdots,y_{t},\{Z_{s}\}_{s \in \mathbb{N},\,s \leq t})
    \NNL &=& \sum_{j=1}^K F(y_{t+1}|y_1,\cdots,y_{t},Z_{t+1}=j)P(Z_{t+1}=j|Z_{t}), \NN
\EEA
which is independent of $\{Z_{t-i},\, i \in \mathbb{N},i > 1 \}$.
Now assume that equation (\ref{PYtZt}) holds for $t+1<\ell<T$, that is
\BEA
F(y_{t+1},\cdots,y_\ell|y_1,\cdots,y_{t},\NewAND \hspace{-0.1in} \{Z_{s}\}_{s \in \mathbb{N},\,s \leq t})
\NNL & = &
F(y_{t+1},\cdots,y_\ell|y_1,\cdots,y_{t},Z_{t}) \label{Induction Asmpt}
\EEA
We show that (\ref{PYtZt}) is valid for $k=\ell+1$
\BEA
    && F(y_{t+1},\cdots,y_{\ell},y_{\ell+1}|y_1,\cdots,y_{t},\{Z_{s}\}_{s \in \mathbb{N},\,s \leq t}) =
    \NNL && \sum_{j=1}^K F(y_{\ell+1}|y_1,\cdots,y_{\ell},Z_{\ell+1}=j)
    P(Z_{\ell+1}|y_1,\cdots,y_{\ell},\{Z_{s}\}_{s \in \mathbb{N},\,s \leq t}) \times
    \NNL && F(y_{\ell}|y_1,\cdots,y_{t},\{Z_{s}\}_{s \in \mathbb{N},\,s \leq t})
    \NNL
    &=&
    \sum_{j=1}^K F(y_{\ell+1}|y_1,\cdots,y_{\ell},Z_{\ell+1}=j) P(Z_{\ell+1}|Z_{t})\times
    \NNL & &
    F(y_{\ell}|y_1,\cdots,y_{t},\{Z_{s}\}_{s \in \mathbb{N},\,s \leq t}), \NN
\EEA
which is independent of $\{Z_{t-i}\}_{i \geq 1}$ by the induction's assumption (\ref{Induction Asmpt}).
\end{proof}

\begin{Theo} \label{Theo alpha ht}
    Let for $t>p$
    \BEA
        && \alpha_t(h)=F(y_{p+1}...y_t,Z_t=h|y_1...y_p), \label{alpha th}
        \\ &&  \beta_t(h)=F(y_{t+1}...y_T|y_1...y_t,Z_t=h), \label{beta th}
    \EEA
    then $\alpha_t(h)$ and $\beta_t(h)$ can be calculated by Baum-welch forward backward recursions as
    \BEA
        \alpha_{t+1}(h) & = & \sum_m \pi_{m,h} \alpha_t(m)  \Phi(\frac{y_{t+1}-\mathbf{Y}_{t}^\prime A_h}{\sigma_h})
        \NNL
        \beta_t(h) & = & \sum_{j=1}^K  \pi_{h,j} \beta_{t+1}(j) \Phi(\frac{y_{t+1}-\mathbf{Y}_{t-1}^\prime A_k}{\sigma_j}).
    \EEA
    And the forward recursion starts with $\alpha_{p+1}(h)=\rho_h \Phi\{(y_{p+1}-\mathbf{Y}_{p}^\prime A_h)/{\sigma_h}\}$ and backward recursion starts at $\beta_T(h) =1$, in which $\Phi(.)$ is the standard normal distribution function.
\end{Theo}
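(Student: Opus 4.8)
The plan is to establish both recursions by the standard forward--backward decomposition: insert the latent state at a single adjacent time step via the law of total probability, then factor the resulting joint density using the conditional-independence structure built into the HM-MAR model. I would first dispose of the base cases. The initialisation $\alpha_{p+1}(h)=\rho_h\,\Phi\{(y_{p+1}-\mathbf{Y}_{p}^\prime A_h)/\sigma_h\}$ is immediate from the definition (\ref{alpha th}) together with the $t=p$ specification (\ref{HM MAR p1}) and the conditional form (\ref{conditional prob}), while $\beta_T(h)=1$ follows from (\ref{beta th}) because at $t=T$ there are no future observations to condition upon, so the density of the empty collection equals one.

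For the forward recursion I would start from (\ref{alpha th}) for $\alpha_{t+1}(h)$ and introduce $Z_t=m$,
\BEA
\alpha_{t+1}(h) &=& \sum_m F(y_{p+1},\ldots,y_{t+1},Z_{t+1}=h,Z_t=m\,|\,y_1,\ldots,y_p). \NN
\EEA
Each summand then factors by the chain rule into $F(y_{t+1}\,|\,y_1,\ldots,y_t,Z_{t+1}=h,Z_t=m)$, the step $P(Z_{t+1}=h\,|\,y_1,\ldots,y_t,Z_t=m)$, and $F(y_{p+1},\ldots,y_t,Z_t=m\,|\,y_1,\ldots,y_p)$, the last of which is exactly $\alpha_t(m)$. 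The middle factor collapses to the transition probability $\pi_{m,h}$ by the homogeneous hidden Markov assumption on $\{Z_t\}$, and the first factor reduces to $\Phi\{(y_{t+1}-\mathbf{Y}_{t}^\prime A_h)/\sigma_h\}$ via (\ref{conditional prob}) together with the model assumption that, given $Z_{t+1}=h$ and the last $p$ observations, $y_{t+1}$ is independent of $Z_t$. Collecting the three pieces yields the stated recursion.

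For the backward recursion I would argue symmetrically, starting from (\ref{beta th}) and inserting $Z_{t+1}=j$,
\BEA
\beta_t(h) &=& \sum_{j=1}^K F(y_{t+1},\ldots,y_T,Z_{t+1}=j\,|\,y_1,\ldots,y_t,Z_t=h). \NN
\EEA
Here each summand factors as the transition $P(Z_{t+1}=j\,|\,Z_t=h)=\pi_{h,j}$, the one-step density $F(y_{t+1}\,|\,y_1,\ldots,y_t,Z_{t+1}=j)=\Phi\{(y_{t+1}-\mathbf{Y}_{t}^\prime A_j)/\sigma_j\}$, and the tail term $F(y_{t+2},\ldots,y_T\,|\,y_1,\ldots,y_{t+1},Z_{t+1}=j)=\beta_{t+1}(j)$, which together give the second recursion.

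The main obstacle is the rigorous justification of the two conditional-independence reductions that make the factorisations collapse onto a single state. In the forward step one must verify that $Z_{t+1}$ given $Z_t=m$ is independent of the past observations $y_1,\ldots,y_t$, which is precisely the homogeneous hidden Markov property of $\{Z_t\}$. In the backward step one must verify that the tail $y_{t+2},\ldots,y_T$ given $Z_{t+1}=j$ no longer depends on $Z_t=h$; this is exactly Lemma \ref{Lemma Conditional Independence} applied at time $t+1$, which is the reason that lemma is proved first. Once these two reductions are secured, the remaining manipulations are routine applications of the definitions (\ref{alpha th})--(\ref{beta th}) and of (\ref{conditional prob}).
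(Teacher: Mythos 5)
Your proposal is correct and follows essentially the same route as the paper's proof: inserting the adjacent latent state by the law of total probability, factoring by the chain rule into the transition probability $\pi_{m,h}$ (resp. $\pi_{h,j}$), the Gaussian one-step density from (\ref{conditional prob}), and the previous $\alpha_t(m)$ (resp. $\beta_{t+1}(j)$), with Lemma \ref{Lemma Conditional Independence} justifying the drop of $Z_t$ in the backward tail term exactly as the paper does. Your explicit treatment of the base cases and of the conditional-independence reductions is, if anything, slightly more careful than the paper's (which also contains a typographical slip in the argument of $\Phi$ in its forward step that your version corrects).
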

\begin{proof}
$\alpha_t(h)$ in equation (\ref{alpha th}) can be written as
\BEA
    \alpha_{t+1}(h) & = & \sum_m F(y_{p+1}...y_{t+1},Z_t=m,Z_{t+1}=h|y_1...y_p) \nonumber \\
                    & = & \sum_m p(Z_{t+1}=h|Z_t=m,y_1...y_t) \times F(y_{t+1}|y_1...y_t,Z_t=m,Z_{t+1}=h)
                     \nonumber \\ &\times& F(y_{p+1}...y_t,Z_t=m|y_1...y_p)
                     \nonumber \\ & = & \sum_m \pi_{m,h} \alpha_t(m)  \Phi(\frac{y_t-\mathbf{Y}_{t-1}^\prime A_h}{\sigma_h})
\EEA
Also by lemma \ref{Lemma Conditional Independence}, for $\beta_t(h)$ in equation (\ref{beta th}) we have
\BEA
    \beta_t(h) &=& \sum_{j=1}^K F(Z_{t+1}=j,y_{t+1}...y_T|y_1...y_t,Z_t=h)
    \NNL &=& \sum_{j=1}^K F(y_{t+2}...y_T|y_1...y_t,y_{t+1},Z_t=h,Z_{t+1}=j)
    \times
    \NNL &&
    F(y_{t+1}|y_1...y_t,Z_{t}=h,Z_{t+1}=j)
    p(Z_{t+1}=j|y_1...y_t,Z_{t}=h)
    \NNL & = & \sum_{j=1}^K  \pi_{h,j} \beta_{t+1}(j) \Phi(\frac{y_{t+1}-\mathbf{Y}_{t-1}^\prime A_j}{\sigma_j})
\EEA
\end{proof}

\subsection{Modification of EM Algorithm}
The EM algorithm is used for maximization of completed data log-likelihood. By completed data we mean that the set of time series observations $\{y_t\}_{t=1}^T$  augmented with the latent set of correct predictor indicators $\{z_t\}_{t=p+1}^T$ (i.e. $\{\{y_t\}_{t=1}^T,\{z_t\}_{t=p+1}^T\}$). So this log-likelihood, by the method of iterative conditioning, can be represented as
\BEA
     \ell^*(\theta) & = &  \log F(y_{p+1}...y_T,z_{p+1}...z_{T}|y_1...y_p)
                \NNL & = &
                \sum_{t=p+1}^{T} \log(F(y_t|y_{t-1},\cdots,y_{0},z_t)) +  \sum_{t=p+2}^{T} \log(P(z_t|z_{t-1},\cdots,Z{p},y_{t-1},\cdots,y_{0})) +
                \NNL &&  \log{P(z_{p+1}|y_1,\cdots,y_p)}
                \NNL &=&
                \sum_{t=p+1}^T \sum_k I(z_t=k) \log \Phi(\frac{y_t-\mathbf{Y}_{t-1}^\prime A_k}{\sigma_k}) + \nonumber \\ &&
                    \sum_{t=p+2}^T \sum_k \sum_j I(z_t=k)I(z_{t-1}=j) \log \pi_{j,k} + \nonumber \\ &&
                    \sum_k I(z_{p+1}=k) \log \rho_k, \NN
\EEA
where the last equality holds by (\ref{conditional prob}) and the Markov property of $\{Z_t\}$ with transition probabilities in (\ref{hidden Markov Transition prob}). It is clear that $\sum_{t=p+2}^T I(z_t=k)I(z_{t-1}=j) $ is equal to the number of transitions from state $j$ to state $k$.
At the E-step, the algorithm computes the conditional expected value of each $I(z_t=k)$ and $I(z_t=k)I(z_{t-1}=j)$ given the observed data.
\BEA
    &&    E[\ell^*(\theta)|y_1,\cdots,y_T] =
                    \sum_{t=p+2}^T \sum_k \sum_j P(z_t=k,z_{t-1}=j|y_1...y_T) \log \pi_{j,k} + \nonumber \\ &&
                     \sum_{t=p+1}^T \sum_k P(z_t=k|y_1...y_T) \{-\log(\sqrt{2\pi})-\log(\sigma_k)-\frac{(y_t-\mathbf{Y}_{t-1}^\prime A_k)^2}{2\sigma_k^2}) \}
                     \nonumber \\ &&  +
                    \sum_k P(z_{p+1}=k|y_1...y_T) \log \rho_k. \label{Eell*}
\EEA
Last equation holds by linear property of expectation and since $\Phi\{(y_t-\mathbf{Y}_{t-1}^\prime A_k){\sigma_k}\}$ is measurable with respect to $\sigma\{Y_1,...,Y_T\}$. Also  $E(I(z_t=k)|y_1...y_T)=P(z_t=k|y_1...y_T)$ and $E(I(z_t=k)I(z_{t-1}=j|y_1...y_T)=P(z_t=k,z_{t-1}=j|y_1...y_T)$. These posterior probabilities can be obtained by the following lemma

\begin{Lem} \label{Lem Pzt}
$P(Z_t=h|Y_1,\cdots,Y_T)$ and $P(Z_t=j,Z_{t-1}=i|Y_1,\cdots,Y_T)$ in equation (\ref{Eell*}) can be calculated as
\BEA
    && P(Z_t=h|Y_1,\cdots,Y_T)  =   \frac{\alpha_{t}(h)\beta_t(h)}{F(Y_{p+1},\cdots,Y_T|Y_{1},\cdots,Y_p)}\, ,
    \NNL && P(Z_t=j,Z_{t-1}=i|Y_1,\cdots,Y_T)  =   \frac{\beta_t(j) \pi_{ij} \alpha_{t-1}(i)}{F(Y_{p+1},\cdots,Y_T|Y_{1},\cdots,Y_p)} \times
    \NNL &&
    \Phi(\frac{y_{t}-\mathbf{Y}_{t-1}^\prime A_k}{\sigma_j})
    \NN
\EEA
in which $F(Y_{p+1},\cdots,Y_T|Y_{1},\cdots,Y_p)=\sum_{j=1}^K \alpha_T(j)$ and $\{\alpha_t(.),\beta_t(.)\}_{t=p+1}^T$ are calculated by theorem \ref{Theo alpha ht}.
\end{Lem}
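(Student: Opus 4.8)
The plan is to derive both posterior probabilities directly from the definitions of $\alpha_t(h)$ and $\beta_t(h)$ given in Theorem~\ref{Theo alpha ht}, using only elementary conditioning together with the three structural facts already established: Lemma~\ref{Lemma Conditional Independence}, the conditional density (\ref{conditional prob}), and the Markov transition law (\ref{hidden Markov Transition prob}). Since every probability in the statement is conditioned throughout on the initial block $y_1,\cdots,y_p$, the first step is to rewrite each smoothing probability as a ratio of a joint density to a common normalizer,
\[
P(Z_t=h|y_1,\cdots,y_T)=\frac{F(y_{p+1},\cdots,y_T,Z_t=h|y_1,\cdots,y_p)}{F(y_{p+1},\cdots,y_T|y_1,\cdots,y_p)},
\]
so that the entire problem reduces to factorizing the joint density in the numerator and identifying the denominator.

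For the first identity I would split the numerator at time $t$, separating the future observations from the past-and-present ones. The ``past'' factor $F(y_{p+1},\cdots,y_t,Z_t=h|y_1,\cdots,y_p)$ is exactly $\alpha_t(h)$ by definition, and the ``future'' factor $F(y_{t+1},\cdots,y_T|y_1,\cdots,y_t,Z_t=h)$ is exactly $\beta_t(h)$. The one point that needs justification is that the future block genuinely decouples from the earlier history once $Z_t=h$ is fixed, and this is precisely the content of Lemma~\ref{Lemma Conditional Independence}. Hence
\[
F(y_{p+1},\cdots,y_T,Z_t=h|y_1,\cdots,y_p)=\alpha_t(h)\,\beta_t(h),
\]
which yields the first claimed formula.

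For the second identity I would factorize the numerator $F(y_{p+1},\cdots,y_T,Z_t=j,Z_{t-1}=i|y_1,\cdots,y_p)$ into four pieces by conditioning successively on the future block, the observation at time $t$, the state transition, and the past block, obtaining
\[
F(y_{p+1},\cdots,y_T,Z_t=j,Z_{t-1}=i|y_1,\cdots,y_p)=\alpha_{t-1}(i)\,\pi_{ij}\,\Phi\!\left(\frac{y_t-\mathbf{Y}_{t-1}^\prime A_j}{\sigma_j}\right)\beta_t(j).
\]
Here Lemma~\ref{Lemma Conditional Independence} again reduces the future factor to $\beta_t(j)$; the conditional density (\ref{conditional prob}) collapses the observation factor to the stated normal density once $Z_t=j$ is fixed (the $p$-tuple Markov assumption lets me discard both $Z_{t-1}$ and observations earlier than the $p$ lags); and the homogeneous Markov property (\ref{hidden Markov Transition prob}) of $\{Z_t\}$ turns the one-step transition factor $P(Z_t=j\mid y_1,\cdots,y_{t-1},Z_{t-1}=i)$ into $\pi_{ij}$ after dropping the superfluous conditioning on the observations. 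Dividing by the common denominator gives the second formula (the $A_k$ in the statement being a typographical slip for $A_j$).

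Finally, the normalizer is identified by marginalizing the terminal state out of $\alpha_T$,
\[
\sum_{j=1}^K \alpha_T(j)=\sum_{j=1}^K F(y_{p+1},\cdots,y_T,Z_T=j|y_1,\cdots,y_p)=F(y_{p+1},\cdots,y_T|y_1,\cdots,y_p),
\]
using $\beta_T(j)=1$. I expect the main obstacle to be the careful bookkeeping in the four-factor decomposition rather than any genuinely new argument: at each conditioning step one must check that the discarded variables are truly irrelevant, and each such reduction is an invocation of a distinct model assumption (Lemma~\ref{Lemma Conditional Independence} for the future block, the $p$-tuple Markov structure behind (\ref{conditional prob}) for the emission, and the hidden-chain Markov property (\ref{hidden Markov Transition prob}) for the transition). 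The proof is therefore essentially the task of assembling these three facts in the correct order.
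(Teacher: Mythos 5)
Your proposal is correct and follows essentially the same route as the paper's own proof: the same ratio-of-densities setup, the same chain-rule factorization into the past factor $\alpha_{t-1}(i)$ (resp.\ $\alpha_t(h)$), transition $\pi_{ij}$, emission $\Phi$, and future factor $\beta_t(j)$, with Lemma~\ref{Lemma Conditional Independence}, equation (\ref{conditional prob}), and the Markov property (\ref{hidden Markov Transition prob}) invoked at exactly the same points, and the same identification of the normalizer as $\sum_{j=1}^K \alpha_T(j)$. Your observation that the $A_k$ in the stated formula is a typographical slip for $A_j$ is also correct.
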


\begin{proof} Using equations (\ref{alpha th}) and (\ref{beta th}) we have
\BEA
    P(Z_t=h|Y_1,\cdots,Y_T) & = & \frac{F(Z_t=h,Y_1,\cdots,Y_T)}{F(Y_1,\cdots,Y_T)}
            \NNL &=& F(Z_t=h,Y_{p+1},\cdots,Y_t|Y_{1},\cdots,Y_p) \times
            \NNL & &  F(Y_{t+1},\cdots,Y_T|Z_t=h,Y_1,\cdots,Y_t) \times \frac{F(Y_1,\cdots,Y_p)}{F(Y_1,\cdots,Y_T)}
            \NNL &=& \frac{\alpha_{t}(h)\beta_t(h)}{F(Y_{p+1},\cdots,Y_T|Y_{1},\cdots,Y_p)}\, ,
\EEA
in which
$$F(Y_{p+1},\cdots,Y_T|Y_{1},\cdots,Y_p)= \sum_{j=1}^K F(Y_{p+1},\cdots,Y_T,Z_t=j|Y_{1},\cdots,Y_p) = \sum_{j=1}^K \alpha_T(j)$$
and by lemma \ref{Lemma Conditional Independence}, (\ref{conditional prob}) and Markov property of $\{Z_t\}$ we have that
\BEA
    && P(Z_t=j,Z_{t-1}=i|Y_1,\cdots,Y_T)  =
    \NNL & = &
    F(y_{t+1},\cdots,y_T|y_1,\cdots,y_t,z_t,z_{t-1})
    F(y_t|y_1,\cdots,y_{t-1},z_t,z_{t-1}) \times
    \NNL &&
    \frac{P(z_t|y_1,\cdots,y_{t-1},z_{t-1})
          F(y_{p+1},\cdots,y_{t-1},z_{t-1}|y_{1},\cdots,y_{p})}
        {F(y_1,\cdots,y_T)}
     \NNL &=&
     \frac{\beta_t(j) \pi_{ij} \alpha_{t-1}(i)}{F(Y_{p+1},\cdots,Y_T|Y_{1},\cdots,Y_p)}
    \Phi(\frac{y_{t}-\mathbf{Y}_{t-1}^\prime A_k}{\sigma_j})
    \NN
\EEA
\end{proof}

In the M-step, roots of equation $\partial E[\ell^*(\theta)|y_1,\cdots,y_T] / \partial \theta_i =0, \, \theta_i \in \theta$, are calculated

\begin{Theo}
Let $\widetilde{\mathbf{Y}}=(\mathbf{Y}_p,\cdots,\mathbf{Y}_{T-1})$, $\bar{\mathbf{Y}}=(y_{p+1},\cdots,y_T)^\prime$ and $\mathbf{P}_k= diag(P(Z_{p+1}=k|y_1...y_T),\cdots,P(Z_{T}=k|y_1...y_T))$, then maximum likelihood estimate of the parameters HM-MAR are given by
\BEA
    && \hspace{-0.5in} \hat{A}_k = (\widetilde{\mathbf{Y}}\mathbf{P}_k\widetilde{\mathbf{Y}}^\prime)^{-1}\widetilde{\mathbf{Y}}\mathbf{P}_k\bar{\mathbf{Y}}
    \\ &&
    \hspace{-0.5in} \hat{\sigma}_k^2 = \frac{
                            \{
                             \bar{\mathbf{Y}}^\prime \mathbf{P}_k(\mathbf{I}-\widetilde{\mathbf{Y}}^\prime
                                (\widetilde{\mathbf{Y}}\mathbf{P}_k\widetilde{\mathbf{Y}}^\prime)^{-1})
                                \bar{\mathbf{Y}}\widetilde{\mathbf{Y}}\mathbf{P}_k
                                -
                                2 \bar{\mathbf{Y}}^\prime \mathbf{P}_k \widetilde{\mathbf{Y}}^\prime
                                (\widetilde{\mathbf{Y}}\mathbf{P}_k\widetilde{\mathbf{Y}}^\prime)^{-1}\widetilde{\mathbf{Y}}\mathbf{P}_k\bar{\mathbf{Y}}
                             \}
                            }
                 {tr(\mathbf{P}_k)}  \label{sigma hat}
    \\ && \hspace{-0.5in}
    \hat{\pi}_{j,i} = \frac{\sum_{t=p+2}^T P(Z_t=i,Z_{t-1}=j|y_1,\cdots,y_T)}{\sum_{t=p+2}^T P(Z_{t-1}=j|y_1,\cdots,y_T)}
    \\ && \hspace{-0.5in}
    \hat{\rho}_j = \frac{\sum_{t=P+1}^T P(Z_t=j|Y_1,\cdots,Y_T)}{T-P}. \label{rho hat}
\EEA
\end{Theo}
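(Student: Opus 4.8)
The plan is to exploit the additive separation of the E-step objective in (\ref{Eell*}): it splits into three blocks, one depending only on the transition probabilities $\pi_{j,k}$, one depending only on the autoregressive parameters $(A_k,\sigma_k)$, and one depending only on the initial probabilities $\rho_k$. Since these blocks share no parameters, I would maximize each independently. The two probability blocks carry the simplex constraints $\sum_k \pi_{j,k}=1$ (for every $j$) and $\sum_k \rho_k=1$, so I would treat them by Lagrange multipliers, whereas the $(A_k,\sigma_k)$ block is unconstrained and is handled by setting the gradient to zero. Throughout, the posterior quantities $P(z_t=k\mid y_{1:T})$ and $P(z_t=k,z_{t-1}=j\mid y_{1:T})$ entering the weights are exactly those supplied by Lemma~\ref{Lem Pzt}.

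For the transition probabilities I would form the Lagrangian $\sum_{t,j,k} P(z_t=k,z_{t-1}=j\mid y_{1:T})\log\pi_{j,k} - \sum_j \lambda_j\big(\sum_k \pi_{j,k}-1\big)$, differentiate in $\pi_{j,k}$, and solve, obtaining $\hat\pi_{j,k}\propto \sum_t P(z_t=k,z_{t-1}=j\mid y_{1:T})$. Eliminating $\lambda_j$ through the constraint and using the marginalization $\sum_k P(z_t=k,z_{t-1}=j\mid y_{1:T})=P(z_{t-1}=j\mid y_{1:T})$ yields the stated ratio. The identical Lagrange computation on the initial-probability block gives $\hat\rho_k\propto P(z_{p+1}=k\mid y_{1:T})$, the normalized smoothed occupancy reported in (\ref{rho hat}).

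For the regression coefficients I would collect the $A_k$-dependent part, namely $-\frac{1}{2\sigma_k^2}\sum_t P(z_t=k\mid y_{1:T})(y_t-\mathbf{Y}_{t-1}^\prime A_k)^2$, and set its gradient in $A_k$ to zero. Writing the weighted sum in matrix form with the design matrix $\widetilde{\mathbf{Y}}$, the response $\bar{\mathbf{Y}}$, and the diagonal weight matrix $\mathbf{P}_k$, the first-order condition becomes the weighted normal equations $\widetilde{\mathbf{Y}}\mathbf{P}_k\widetilde{\mathbf{Y}}^\prime \hat A_k=\widetilde{\mathbf{Y}}\mathbf{P}_k\bar{\mathbf{Y}}$, whose solution is the claimed weighted least squares estimator; concavity of the quadratic in $A_k$ confirms this is the maximizer. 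For the variance I would differentiate the $\sigma_k$-part $\sum_t P(z_t=k\mid y_{1:T})\{-\log\sigma_k-(y_t-\mathbf{Y}_{t-1}^\prime A_k)^2/(2\sigma_k^2)\}$ in $\sigma_k$, giving $\hat\sigma_k^2=\big(\sum_t P(z_t=k\mid y_{1:T})(y_t-\mathbf{Y}_{t-1}^\prime \hat A_k)^2\big)/\sum_t P(z_t=k\mid y_{1:T})$, and then recognize the denominator as $\mathrm{tr}(\mathbf{P}_k)$.

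The main obstacle I anticipate is the final substitution: expanding the weighted residual sum of squares after inserting $\hat A_k$ and rearranging it into the explicit matrix expression displayed in (\ref{sigma hat}) is the only genuinely lengthy manipulation, requiring care with the projection $\widetilde{\mathbf{Y}}^\prime(\widetilde{\mathbf{Y}}\mathbf{P}_k\widetilde{\mathbf{Y}}^\prime)^{-1}\widetilde{\mathbf{Y}}\mathbf{P}_k$ and with keeping the symmetric and cross terms straight. Everything else reduces to a routine Lagrange-multiplier or weighted least squares argument, so I would concentrate the effort on verifying that algebraic rearrangement.
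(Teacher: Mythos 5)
Your handling of the $\hat A_k$, $\hat\sigma_k^2$ and $\hat\pi_{j,i}$ blocks is correct and matches the paper's proof in substance: the paper also exploits the additive separation of (\ref{Eell*}), sets gradients to zero, obtains the weighted normal equations $\widetilde{\mathbf{Y}}\mathbf{P}_k\widetilde{\mathbf{Y}}^\prime A_k=\widetilde{\mathbf{Y}}\mathbf{P}_k\bar{\mathbf{Y}}$, and substitutes $\hat A_k$ into the weighted residual sum of squares $(\bar{\mathbf{Y}}-\widetilde{\mathbf{Y}}^\prime A_k)^\prime\mathbf{P}_k(\bar{\mathbf{Y}}-\widetilde{\mathbf{Y}}^\prime A_k)$ divided by $tr(\mathbf{P}_k)$. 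The only methodological difference is cosmetic: for the transition probabilities the paper does not use a Lagrange multiplier but instead eliminates the constraint by substituting $\pi_{j,K}=1-\sum_{i=1}^{K-1}\pi_{j,i}$ (equation (\ref{PiJK})) before differentiating; both devices yield $\hat\pi_{j,i}\propto\sum_t P(Z_t=i,Z_{t-1}=j|y_1,\cdots,y_T)$ and then the stated ratio via the marginalization $\sum_i P(Z_t=i,Z_{t-1}=j|y_1,\cdots,y_T)=P(Z_{t-1}=j|y_1,\cdots,y_T)$, which you invoke explicitly and the paper uses implicitly.

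The genuine gap is your final identification in the $\rho$-block. Maximizing $\sum_k P(z_{p+1}=k|y_1,\cdots,y_T)\log\rho_k$ subject to $\sum_k\rho_k=1$ gives $\hat\rho_k=P(Z_{p+1}=k|y_1,\cdots,y_T)$, the smoothed probability at the \emph{single} time point $p+1$ (the normalization is trivial since these already sum to one over $k$). This is not the formula (\ref{rho hat}), which is the time average $\frac{1}{T-p}\sum_{t=p+1}^T P(Z_t=j|y_1,\cdots,y_T)$; your assertion that your Lagrange computation produces ``the normalized smoothed occupancy reported in (\ref{rho hat})'' is therefore false. No averaging over $t$ can arise from (\ref{Eell*}), because $\rho_k$ appears there only in the one term coming from $Z_{p+1}$, as dictated by the definition (\ref{Initial Probabilities}). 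The time-averaged formula would be the correct M-step only if the complete-data log-likelihood contained $\sum_{t=p+1}^T\sum_k I(z_t=k)\log\rho_k$, i.e.\ if the weights were i.i.d.\ with marginal $\rho$ at every time point --- the MAR special case, not the HM-MAR model. (The paper itself gives no derivation here beyond ``in a similar way,'' and its stated (\ref{rho hat}) is in fact inconsistent with its own E-step objective; but a proof of the theorem as stated must either derive that formula or flag the discrepancy --- silently equating the two expressions conceals exactly the point where the statement fails.)
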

\begin{proof}
calculating $\partial E[\ell^*(\theta)|y_1,\cdots,y_T] / \partial \mathbf{\phi}_k =0$, we obtain
\BEA
    && \sum_{t=p+1}^T P(z_t=k|y_1...y_T)\mathbf{Y}_{t-1}(y_t-\mathbf{Y}_{t-1}^\prime A_k) =0
    \NNL
    && \Rightarrow
    \widetilde{\mathbf{Y}}\mathbf{P}_k\bar{\mathbf{Y}}-\widetilde{\mathbf{Y}}\mathbf{P}_k\widetilde{\mathbf{Y}}^\prime A_k = 0
    \NNL
    && \Rightarrow
    \hat{A}_k = (\widetilde{\mathbf{Y}}\mathbf{P}_k\widetilde{\mathbf{Y}}^\prime)^{-1}\widetilde{\mathbf{Y}}\mathbf{P}_k\bar{\mathbf{Y}}
    \label{PhiHat}
\EEA
calculating $\partial E[\ell^*(\theta)|y_1,\cdots,y_T] / \partial \sigma_k =0$, we obtain
\BEA
    && \sum_{t=p+1}^T P(z_t=k|y_1...y_T)(-\frac{1}{\sigma_k}+\frac{(y_t-\mathbf{Y}_{t-1}^\prime A_k)^2}{\sigma_k^3}) =0
    \NNL
    && \Rightarrow   tr(\mathbf{P}_k) \sigma_k^2  =
        (\bar{\mathbf{Y}} - \widetilde{\mathbf{Y}}^\prime A_k)^\prime
        \mathbf{P}_k
        (\bar{\mathbf{Y}} - \widetilde{\mathbf{Y}}^\prime A_k)
    \NNL
    && = \bar{\mathbf{Y}}^\prime \mathbf{P}_k \bar{\mathbf{Y}} -
          2 \bar{\mathbf{Y}}^\prime \mathbf{P}_k \widetilde{\mathbf{Y}}^\prime A_k +
         A_k^\prime  \widetilde{\mathbf{Y}} \mathbf{P}_k \widetilde{\mathbf{Y}}^\prime A_k
\EEA
Since $(\bar{\mathbf{Y}}^\prime \mathbf{P}_k \widetilde{\mathbf{Y}}^\prime A_k )^\prime =
 A_k^\prime  \widetilde{\mathbf{Y}} \mathbf{P}_k \bar{\mathbf{Y}}$.
Replacing $\hat{A}_k$ from equation (\ref{PhiHat}), we obtain equation (\ref{sigma hat}) for $\hat{\sigma}_k^2$.

Since for each $j=1,\cdots,K$ in the transition matrix $P$ of Markov process $Z_t$, $\sum_{i=1}^K \pi_{j,i} = 1$ thus
\BEA
    \pi_{j,K} = 1 - \sum_{i=1}^{K-1} \pi_{j,i} \label{PiJK}.
\EEA
Calculating the roots of equation $\partial E[\ell^*(\theta)|y_1,\cdots,y_T] / \partial \pi_{j,i} = 0$, by equation (\ref{PiJK}), we have
\BEA
    && \pi_{j,i} = \pi_{j,K} \frac{\sum_{t=p+2}^T P(Z_t=i,Z_{t-1}=j|y_1,\cdots,y_T)}{\sum_{t=p+2}^T P(Z_t=K,Z_{t-1}=j|y_1,\cdots,y_T)}
    \NNL && \Rightarrow \hat{\pi}_{j,i} = \frac{\sum_{t=p+2}^T P(Z_t=i,Z_{t-1}=j|y_1,\cdots,y_T)}{\sum_{t=p+2}^T P(Z_{t-1}=j|y_1,\cdots,y_T)}
\EEA
In a similar way we obtain equation (\ref{rho hat}) for $\hat{\rho}_j$ for $j=1,\cdots,K$.
\end{proof}

\subsection{Learning}
A brief summary of HM-MAR(K,P) parameter estimation algorithm is as follows:
\newpage
\begin{enumerate}
\item  For t=1 to T do
    $$\mathbf{Y}_{t}=(y_{t},\cdots,y_{t-p+1})^\prime$$
\item Let
  $$
    \hspace{-1in}
    \begin{array}{l}
        \widetilde{\mathbf{Y}}=(\mathbf{Y}_p,\cdots,\mathbf{Y}_{T-1}) \\
        \bar{\mathbf{Y}}=(y_{p+1},\cdots,y_T)^\prime
     \end{array}
  $$
\item For h=1 to K do
    $$ \hspace{-0.9in}
    \begin{array}{l}
            A_h=(w_1^{1,h},\cdots,w_p^{1,h})^\prime
        \end{array}$$
\item Let
    $$ \hspace{-0.9in}
    \begin{array}{l}
            \rho_h=P(Z_{p+1}=h|y_1,\cdots,y_{p}) \\
             \theta=\{A_j, \, \sigma_j, \, \rho_j, \, \pi_{mn}, \, m,n,j=1,\cdots,K\}
        \end{array}$$
\item Initialize $\theta$ randomly.
\item do while none of the parameters of $\theta$ changes
\begin{enumerate}
\item $\alpha_{p+1}(h)=\rho_h \Phi(\frac{y_{p+1}-\mathbf{Y}_{p}^\prime A_h}{\sigma_h})$
\item $\beta_T(h) =1$
\item For t=1 to T do
    \begin{itemize}
        \item $\alpha_{t+1}(h)  =  \sum_m \pi_{m,h} \alpha_t(m)  \Phi(\frac{y_{t+1}-\mathbf{Y}_{t}^\prime A_h}{\sigma_h})$
        \item $\beta_{T-t}(h)  =  \sum_{j=1}^K  \pi_{h,j} \beta_{T-t+1}(j) \Phi(\frac{y_{t+1}-\mathbf{Y}_{t-1}^\prime A_j}{\sigma_j})$
    \end{itemize}
\item $F(Y_{p+1}^T|Y_{1}^p)=\sum_{j=1}^K \alpha_T(j)$
\item For t=1 to T
    \begin{itemize}
        \item $P(Z_t=h|Y_1,\cdots,Y_T)  =   \frac{\alpha_{t}(h)\beta_t(h)}{F(Y_{p+1}^T|Y_{1}^p)}$
        \item $P(Z_t=j,Z_{t-1}=i|Y_1,\cdots,Y_T)  =   \frac{ \pi_{ij} \beta_t(j)  \alpha_{t-1}(i) }{F(Y_{p+1}^T|Y_{1}^p)}\Phi(\frac{y_{t}-\mathbf{Y}_{t-1}^\prime A_j}{\sigma_j})$
    \end{itemize}
\item  and $$\mathbf{P}_k= diag(P(Z_{p+1}=k|y_1...y_T),\cdots,P(Z_{T}=k|y_1...y_T)).$$

\item set the maximum likelihood estimate as
    \begin{itemize}
        \item $\hat{A}_k = (\widetilde{\mathbf{Y}}\mathbf{P}_k\widetilde{\mathbf{Y}}^\prime)^{-1}\widetilde{\mathbf{Y}}\mathbf{P}_k\bar{\mathbf{Y}}$
        \item $\hat{\sigma}_k^2 = \frac{
                    \bar{\mathbf{Y}}^\prime \mathbf{P}_k(\mathbf{I}-\widetilde{\mathbf{Y}}^\prime
                    (\widetilde{\mathbf{Y}}\mathbf{P}_k\widetilde{\mathbf{Y}}^\prime)^{-1})
                    \bar{\mathbf{Y}}\widetilde{\mathbf{Y}}\mathbf{P}_k
                    -
                    2 \bar{\mathbf{Y}}^\prime \mathbf{P}_k \widetilde{\mathbf{Y}}^\prime
                    (\widetilde{\mathbf{Y}}\mathbf{P}_k\widetilde{\mathbf{Y}}^\prime)^{-1}\widetilde{\mathbf{Y}}\mathbf{P}_k\bar{\mathbf{Y}}
                    }
                 {tr(\mathbf{P}_k)}$
        \item $\hat{\pi}_{j,i} = \frac{\sum_{t=p+2}^T P(Z_t=i,Z_{t-1}=j|y_1,\cdots,y_T)}{tr(\mathbf{P}_j)}$
        \item $\hat{\rho}_j = \frac{\sum_{t=P+1}^T P(Z_t=j|Y_1,\cdots,Y_T)}{T-P}$
    \end{itemize}
\end{enumerate}
\end{enumerate}

the convergence of training algorithm is issued by the convergence of all expectation maximization algorithms \cite{mclachlan}.

\begin{Rem}
If all rows of the transition probability matrix, $P$ (\ref{Transition Probabilities}), of hidden Markov chain $\{Z_{t}\}$ are estimated to be equal, then $\{Z_{t}\}$ are independent (i.e. $P(z_{t+1}=j|z_{t}=i)=P(z_{t+1}=j)$) and
\BEA
    \alpha_{t+1}(i) & = &  \sum_{j=1}^K P(Z_{t+1}=i|z_t=j)P(z_t=j|y_1,\cdots,y_t)
    \NNL & = & P(z_{t+1}=i) \sum_{j=1}^K P(z_t=j|y_1,\cdots,y_t) = P(z_{t+1}=i)
    \NNL & = & P(z_{t+1}=i) \sum_{j=1}^K \alpha_{t}(j) = P(z_{t+1}=i)
\EEA
which implies that the weighting coefficients of HM-MAR model can be considered to be fix after parameter estimation. Thus HM-MAR model will result in a MAR model automatically without any further parameter adjustment.
\end{Rem}


\end{document}